
%

\documentclass{amsart}
\usepackage{amsmath, amssymb}
\usepackage[numbers]{natbib}
\usepackage{hyperref}
\usepackage{enumitem}
\setlist{nosep}
\bibliographystyle{authordate1}
\newtheorem{theorem}{Theorem}[section]
\theoremstyle{observation}
\newtheorem{observation}[theorem]{Observation}
\newtheorem{proposition}[theorem]{Proposition}
\newtheorem{lemma}[theorem]{Lemma}
\newtheorem{corollary}[theorem]{Corollary}
\theoremstyle{definition}
\newtheorem{definition}[theorem]{Definition}
\newtheorem{example}[theorem]{Example}
\newtheorem{conjecture}[theorem]{Conjecture}

\theoremstyle{remark}

\numberwithin{equation}{section}

\begin{document}
\title[TPP triples \& finite groups with abelian normal subgroups of prime index]{A note on the triple product property for finite groups with abelian normal subgroups of prime index}
\author{Sandeep R. Murthy}
\address{Dorchester-on-Thames, Oxfordshire, United Kingdom}
\curraddr{}
\email{srm@tuta.com}
\thanks{The author is unaffiliated. With the exception of some minor updates, all the work contained in this note was originally completed in 2013-2014, and for this the author is grateful for the assistance of Prof. Sarah Hart, Birkbeck, University of London, and Dr. Peter M. Neumann, The Queen's College, Oxford.}
\subjclass[2020]{Primary 20D60, Secondary 68R05}
\keywords{triple product property, finite groups, TPP triple, TPP capacity, subgroup TPP capacity, TPP ratio, subgroup TPP ratio, fast matrix multiplication}
\date{11 December 2025}
\dedicatory{}

\begin{abstract}
{Three non-empty subsets $S,T,U$ of a group $G$ are said to satisfy the triple product property (TPP) if, for elements $s,s' \in S$, and $t,t' \in T$, and $u,u' \in U$, the equation $s's^{-1}t't^{-1}u'u^{-1}=1$ holds if and only if $s = s'$, $t = t'$, $u = u'$.  If this is the case then $(S,T,U)$ is called a TPP triple of $G$ and $|S||T||U|$ the size of the triple.  If $G$ is a finite group the triple product ratio of $G$ can be defined as the quantity $\rho(G) := \frac{\beta(G)}{|G|}$, where $\beta(G)$ is the largest size of a TPP triple of $G$, and a special case of this, the subgroup triple product ratio, is the quantity $\rho_0(G) := \frac{\beta_0(G)}{|G|}$, where $\beta_0(G)$ is the largest size of a TPP triple of $G$ composed only of subgroups. There is a conjecture that $\rho(G) \leq \frac{4}{3}$ if $G$ contains a cyclic subgroup of index $2$ \citep[Conjecture 7.6]{HM}.  This note proves a more general version of this conjecture for subgroups by showing that $\rho_0(G) \leq \frac{p^2}{2p-1}$ if $G$ is any finite group that contains an abelian normal subgroup of prime index $p$, an improvement by a factor of $\frac{1}{2p-1}$ on the general upper bound of $p^2$ when $G$ contains any abelian subgroup of index $p$. In conclusion a generalised conjecture using the same upper bound is presented for $\rho$ for groups with cyclic normal subgroups of prime index, based on the known data for $\rho$ in such groups of small order.}
\end{abstract}

\maketitle

\tableofcontents

\section{Introduction}

Notation: Standard set-theoretic and group-theoretic notation is used. Groups will generally be finite, unless otherwise stated.

\begin{definition}\citep[Definition 2.1]{CU} Let $G$ be a group, finite or infinite, and $S,T,U$ non-empty subsets of $G$ with cardinalities $|S|, |T|, |U|$ respectively.  The triple $(S,T,U)$ is said to satisfy the \emph{triple product property} (TPP) if

\begin{equation}\label{GeneralTPP} s's^{-1}t't^{-1}u'u^{-1} = 1 \Longrightarrow s = s', t = t', u = u' \end{equation}

for all $s, s' \in S$, $t,t' \in T$, $u,u' \in U$. In this case, $G$ is said to \emph{realise} a TPP triple of \emph{parameter type}, or simply \emph{type}, $(|S|, |T|, |U|)$, and $|S|, |T|, |U|$ are called the \emph{parameters} of the triple and the product $|S||T||U|$ the \emph{size} of the triple.  If, additionally, $S, T, U$ are subgroups of $G$ then $(S,T,U)$ is called a \emph{subgroup TPP triple} of $G$, in which case the defining relation above simplifies to

\begin{equation} stu = 1 \Longrightarrow s = t = u = 1, \end{equation}

for all $s \in S$, $t \in T$, $u \in U$.
\end{definition}

TPP triples were first introduced by H. Cohn and C. Umans in 2003 to study the complexity of fast matrix multiplication in the context of finite groups, to be more specific, to realise matrix multiplication as multiplication in a finite group algebra via a triple of non-empty subsets of the group satisfying the TPP, and that are used to index the rows and columns of the matrices being multiplied and then recover the entries of the product \citep{CU}. The effectiveness of this group-theoretic approach depends on finding the smallest possible groups that realise TPP triples with parameter types matching the dimensions of matrix multiplication of interest \citep{CUKS, HM, HHMM}. 

\vspace{1mm}

However this note pursues some combinatorial aspects of maximising TPP triple sizes in finite groups, independently of their applications to matrix multiplication, as described more formally by P. Neumann in \citep{Neu}.  These involve two quantities. The first quantity, called the \emph{subgroup TPP ratio of $G$}, is defined as

\begin{equation}
\rho_0(G) := \frac{\beta_0(G)}{|G|}
\end{equation}

where $\beta_0(G)$ is the \emph{subgroup TPP capacity of $G$} defined as

\begin{equation}
\beta_0(G) := \max \;\{|S||T||U| \mid (S,T,U) \textrm{ is a subgroup TPP triple of }G\}.
\end{equation}.

The second quantity, called the \emph{TPP ratio of $G$}, is defined as

\begin{equation}
\rho(G) := \frac{\beta(G)}{|G|}
\end{equation}

where $\beta(G)$ is the \emph{TPP capacity of $G$} defined as

\begin{equation}
\beta(G) := \max \;\{|S||T||U| \mid (S,T,U) \textrm{ is a TPP triple of }G\}. 
\end{equation}

As subgroup TPP triples are special cases of TPP triples that are composed only of subgroups, clearly $\beta_0(G) \leq \beta(G)$ and $\rho_0(G) \leq \rho(G)$.  Note that $G$ always realises the trivial subgroup TPP triple $(G,\{1\},\{1\})$, so that $\beta_0(G) \geq |G|$, or equivalently, $\rho_0(G) \geq 1$.

\vspace{1mm}

It is natural to seek best upper bounds for $\beta(G)$ and $\beta_0(G)$ (equivalently, for $\rho(G)$ and $\rho_0(G)$). It was shown by Cohn and Umans that if $G$ is a dihedral group then $\rho(G) \geq \frac{4}{3}$ \citep{CU}. P. Neumann derived a general upper bound for $\beta(G)$ \citep[Corollary 3.2]{Neu} that

\begin{equation}
\beta(G) \leq \left( \frac{1 + \sqrt{1 + 8|G|}}{4}  \right)^3 < |G|^{3/2} 
\end{equation}

using the fact that \citep[Observation 3.1]{Neu}

\begin{observation}
If $(S, T, U)$ is a TPP triple of a group $G$ then $|S|(|T| + |U| - 1) \leq |G|$.
\end{observation}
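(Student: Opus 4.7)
The plan is to build an explicit subset of $G$ of cardinality exactly $|S|(|T|+|U|-1)$. I would fix arbitrary elements $t_0 \in T$ and $u_0 \in U$ and consider the two subsets
\[
A := S^{-1} T t_0^{-1} \quad\text{and}\quad B := S^{-1} U u_0^{-1}
\]
of $G$. The goal is then to show $|A| = |S||T|$, $|B| = |S||U|$, and $|A \cap B| = |S|$, so that inclusion--exclusion gives $|A \cup B| = |S|(|T|+|U|-1)$, and the trivial inclusion $A \cup B \subseteq G$ yields the stated bound.

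For the cardinalities of $A$ and $B$, the injectivity of the parametrisation $(s,t) \mapsto s^{-1} t\, t_0^{-1}$ follows from the TPP restricted to the slice $u = u'$: an equality $s_1^{-1} t_1 = s_2^{-1} t_2$ rearranges to $s_2 s_1^{-1} \cdot t_2 t_1^{-1} = 1$, which is the TPP identity (\ref{GeneralTPP}) in the case $u = u'$, forcing $s_1 = s_2$ and $t_1 = t_2$. The computation $|B| = |S||U|$ is symmetric, using the TPP with $t = t'$.

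The main step, and the only place where the full three-set TPP is used, is the computation of $A \cap B$. The inclusion $S^{-1} \subseteq A \cap B$ is immediate since $s^{-1} t_0 t_0^{-1} = s^{-1} = s^{-1} u_0 u_0^{-1}$ for every $s \in S$. For the reverse inclusion I would take a coincidence $s_1^{-1} t\, t_0^{-1} = s_2^{-1} u\, u_0^{-1}$ and rearrange it into
\[
s_2 s_1^{-1} \cdot t\, t_0^{-1} \cdot u_0 u^{-1} = 1,
\]
which is exactly the TPP identity with $s' = s_2$, $s = s_1$, $t' = t$, $t = t_0$, $u' = u_0$, $u = u$. Applying the TPP then forces $s_1 = s_2$, $t = t_0$ and $u = u_0$, so the common element collapses to $s_1^{-1}\in S^{-1}$.

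I do not expect a serious obstacle: the non-routine content is concentrated in the intersection step, but the specific right shifts by $t_0^{-1}$ and $u_0^{-1}$ are introduced precisely so that the rearranged equation matches the literal form of the TPP identity. Once that matching is spotted the rest is bookkeeping, and it is clear from the construction why the stated bound is asymmetric in $S$ as opposed to $T,U$: $S$ is the factor that is factored out on the left in both $A$ and $B$, so it appears with multiplicity one in the intersection while $T$ and $U$ contribute their full cardinalities separately.
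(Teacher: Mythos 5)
Your proof is correct, and it is essentially the standard argument (the one in Neumann's note, which this paper cites without reproving): the sets $S^{-1}Tt_0^{-1}$ and $S^{-1}Uu_0^{-1}$ have cardinalities $|S||T|$ and $|S||U|$ and intersect exactly in $S^{-1}$, each of these three facts being an instance of the TPP identity, and inclusion--exclusion finishes the count; the right shifts by $t_0^{-1}$ and $u_0^{-1}$ play the role of the translation to a basic triple used elsewhere in the paper. The only blemish is in the $|A|=|S||T|$ step, where $s_1^{-1}t_1=s_2^{-1}t_2$ rearranges to $s_2s_1^{-1}t_1t_2^{-1}=1$ rather than $s_2s_1^{-1}t_2t_1^{-1}=1$, but both expressions are instances of (\ref{GeneralTPP}) with $u=u'$, so the conclusion $s_1=s_2$, $t_1=t_2$ is unaffected.
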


In 2012 Hedtke and Murthy conjectured \citep[Table 1 and Conjectures 7.5-7.6]{HM}, based on tables of TPP triple data in groups with cyclic subgroups of index $2$ of order up to $32$, obtained via exhaustive search algorithms in these groups implemented using the GAP computer algebra system, that

\begin{conjecture}
If $G$ is a group with a cyclic subgroup of index $2$ then $\rho(G) \leq \frac{4}{3}$.
\end{conjecture}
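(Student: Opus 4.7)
The plan is to extend the subgroup bound $\rho_0(G) \leq \frac{4}{3}$ (the case $p=2$ of the main theorem of this note, since a cyclic subgroup of index $2$ is automatically normal and abelian) to arbitrary TPP triples via coset analysis. Fix the cyclic index-$2$ subgroup $H$ and an element $g \in G \setminus H$, so that $G = H \sqcup gH$. Given a maximum-size TPP triple $(S, T, U)$, partition each set by coset: $S_0 = S \cap H$, $S_1 = S \cap gH$, and analogously $T_i, U_i$ for $i \in \{0, 1\}$.

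First I would apply the three symmetric forms of Observation 3.1 (the stated one together with its two cyclic variants obtained by permuting the roles of $S, T, U$), and assume without loss of generality $|S| \geq |T| \geq |U|$. These dispose of the small-$|U|$ regime (for instance $|U|=1$ gives immediately $|S||T||U| \leq |G|$), so I may reduce to the case where all three parameters are bounded below. Second, I would pull the $gH$-parts back into $H$ by left-multiplication by $g^{-1}$. The TPP relation then unpacks into a system of conditions: for each choice of cosets for $s, s', t, t', u, u'$ with parities summing to $0 \bmod 2$, the product $s's^{-1}t't^{-1}u'u^{-1}$ lies in $H$, and the vanishing condition restricts to a twisted TPP-like constraint on the corresponding six pieces, with the conjugation action of $g$ (an involutive automorphism of $H$) entering wherever the corresponding coset index is $1$.

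Inside the abelian group $H$, the ordinary TPP condition is equivalent to the Sidon-type requirement that the sumset $S+T+U$ has size $|S||T||U|$, forcing $\rho(H) = 1$. I would combine this with Pl\"unnecke--Ruzsa-type inequalities to bound each of the $2^3=8$ parity-consistent coset products $|S_i||T_j||U_k|$, and sum to recover $|S||T||U| \leq \frac{4}{3}|G|$. The main obstacle will be controlling the $g$-conjugation twist in the cross-coset terms: in the dihedral case $g$ inverts $H$, whereas in the generalized quaternion, semi-dihedral, and modular maximal-cyclic families the involution differs, so the cross-coset estimates must be carried out family by family via the classification of groups of order $2n$ with a cyclic subgroup of index $2$. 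I expect the dihedral family to be the tight one, saturating $\frac{4}{3}$ per Cohn--Umans \citep{CU}, with strict inequality elsewhere; the computational data cited in \citep{HM} for groups of order up to $32$ would serve as a sanity check on the extremal configurations and to flag any sharp sub-case that the analysis misses.
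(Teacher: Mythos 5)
You should first be aware that the statement you are proving is Conjecture 1.2 of this note, which the paper does \emph{not} prove: the paper establishes only the subgroup analogue $\rho_0(G)\leq\frac{4}{3}$ (the $p=2$ case of Theorem 4.1) and explicitly records that even the dihedral case of the full conjecture rests on an unpublished private communication. So there is no proof in the paper to compare yours against, and your argument has to stand entirely on its own. It does not, because the step you defer --- passing from subgroup triples to arbitrary subsets --- is precisely the open content of the conjecture. Every tool the paper uses to control the coset decomposition (Lemma 3.3 and the counting in Theorem 4.1) depends essentially on $S_0T_0U_0$ being a \emph{subgroup} of $H$, on the sets $S_x^{-1}T_xU_0$ being genuine cosets of it, and on $T_0$ and $U_0$ being normal in $G$ via the Second Isomorphism Theorem; none of this survives when $S,T,U$ are arbitrary subsets, and your proposal replaces it with an unspecified appeal to Pl\"unnecke--Ruzsa inequalities without indicating how they would produce the constant $\frac{4}{3}$.

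There is moreover a concrete reason the main estimate as you sketch it cannot work. Your bound on the parity-consistent coset products $|S_i||T_j||U_k|$ is derived from the abelianness of $H$ alone (the Sidon-type injectivity giving $\rho(H)=1$), with cyclicity relegated to a final family-by-family clean-up of the conjugation twist. But the paper's concluding remarks exhibit two groups of order $32$ (GAP IDs $[32,11]$ and $[32,27]$) that possess an abelian normal subgroup of index $2$, possess \emph{no} cyclic subgroup of index $2$, and realise $\rho=1.5>\frac{4}{3}$ via a non-subgroup triple of type $(6,4,2)$. Any argument whose central inequality uses only the abelian coset structure would apply equally to these groups and would therefore prove a false statement. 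Cyclicity of $H$ must enter the main estimate itself, not merely the tail of the case analysis, and your proposal contains no mechanism by which it does.
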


The author is aware of a proof (from a private communication from another researcher) of this conjecture for dihedral groups $D_{2n}$, that is, $\rho(D_{2n}) \leq \frac{4}{3}$.

\vspace{1mm}

This note focuses on subgroup TPP ratio $\rho_0$ (equivalently, subgroup TPP capacity $\beta_0$), and proves a generalisation of the conjecture for $\rho_0$ for all groups with abelian normal subgroups of prime index. First, a few relevant properties and characterisations of TPP triples are stated, followed by proofs of two technical results needed to establish the main result. Following the proof of the main result and a basic corollary some implications for $\rho$ for groups with cyclic normal subgroups of prime index are discussed, based on known data for $\rho$ in such groups.

\section{Elementary Properties of TPP Triples}

Let $G$ be a group.

\begin{definition}\citep[p. 234]{Neu}\label{BasicTPPTriple} A TPP triple $(S,T,U)$ of $G$ is called \emph{basic} if
\begin{equation}\label{BasicTPPC1}S \cap T \cap U = \{1\}.
\end{equation}
\end{definition}

The TPP can also be described in terms of certain sets called \emph{quotient sets}, which have the following definition.

\begin{equation}
Q(X, Y) = XY^{-1} = \{xy^{-1} | x \in X, y \in Y\} \subseteq G, \hspace{3em}X, Y \subseteq G; X, Y \neq \emptyset
\end{equation}

If $X = Y$ then $Q(X)$ is a shorthand for $Q(X, Y)$.

\vspace{1mm}

Note that, by definition, a quotient set $Q(X)$ contains the identity $1$, is equal to its inverse, that is, $Q(X) = Q(X)^{-1}$, and, furthermore, if $1 \in X$ then $Q(X)$ contains both $X$ and its inverse $X^{-1}$, that is, $X \cup X^{-1} \subseteq Q(X)$. However, a quotient set is not necessarily closed under taking products, as otherwise it would be a (sub)group, which isn't generally true. For a subgroup it is true that it is equal to its quotient set.

\begin{theorem} \label{HM} \textup{\citep[Theorem 3.1]{HM}} Three non-empty subsets $S,T,U \subseteq G$ satisfy the TPP if and only if 
\begin{equation}\label{BasicTPPC2}Q(S) \cap Q(T)Q(U)  = Q(T) \cap Q(U) = \{1\}.
\end{equation}

If $S, T, U$ happen to be subgroups then the defining relation above simplifies to

\begin{equation}\label{SubgroupTPP}S \cap TU  = T \cap U = \{1\}.
\end{equation}
\end{theorem}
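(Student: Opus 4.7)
The plan is to prove both directions of the equivalence by direct manipulation of the defining TPP equation, making repeated use of the fact that every quotient set is closed under inversion, i.e.\ $Q(X) = Q(X)^{-1}$.

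For the forward direction, I would assume TPP and derive the quotient-set conditions in turn. To show $Q(T) \cap Q(U) = \{1\}$, given an element $x$ of the intersection, written as $x = t't^{-1} = u'u^{-1}$, I would fix any $s \in S$ and take $s' = s$ to kill the $S$-factor; after rearranging and relabelling $u \leftrightarrow u'$ (using $Q(U) = Q(U)^{-1}$) to match the shape of equation~(\ref{GeneralTPP}), applying TPP forces $t = t'$ and hence $x = 1$. To show $Q(S) \cap Q(T)Q(U) = \{1\}$, an element $y = s's^{-1} = (t't^{-1})(u'u^{-1})$ of the intersection gives, after moving the $S$-factor to the left and performing an analogous swap $s \leftrightarrow s'$, a relation of exactly the TPP form, which collapses to $y = 1$.

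For the reverse direction, I would start from an arbitrary vanishing product $s's^{-1}t't^{-1}u'u^{-1} = 1$. Rearranging produces $s(s')^{-1} = t't^{-1}u'u^{-1}$, which lies in $Q(S) \cap Q(T)Q(U)$ and is therefore $1$ by hypothesis; hence $s = s'$ and the equation collapses to $t't^{-1} = u(u')^{-1}$, an element of $Q(T) \cap Q(U)$, again forced to be $1$ by the second hypothesis. This yields $t = t'$ and then, substituting back, $u = u'$.

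The subgroup specialisation is immediate, since for a subgroup $H$ one has $Q(H) = HH^{-1} = H$, so the general condition reduces term-by-term to $S \cap TU = T \cap U = \{1\}$; the equivalence between the general TPP equation and the simplified form $stu = 1 \Rightarrow s = t = u = 1$ for subgroups is the same reindexing. The only genuine subtlety is bookkeeping with inversions and the order of set products: one must take care not to confuse $Q(T)Q(U)$ with $Q(U)Q(T)$. However, because each is the inverse of the other and $\{1\}$ is invariant under inversion, the two possible forms of the stated condition are equivalent, so this is a minor issue rather than a real obstacle.
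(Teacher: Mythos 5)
Your argument is correct: both directions follow by the direct rearrangements you describe, the inversion-invariance of quotient sets handles the reindexing of pairs, and the subgroup specialisation via $Q(H)=H$ is right. The paper itself gives no proof of this theorem (it is quoted from Hedtke--Murthy, Theorem 3.1), and your proof is the standard direct verification one would expect there, so there is nothing substantive to compare.
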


The following result states two invariance properties of TPP triples under permutations or certain kinds of set translations of its members.

\begin{observation} \label{Invariance} Let $(S,T,U)$ be any TPP triple of $G$, not necessarily basic in the sense defined above.
\begin{enumerate}
\vspace{1mm}
\item If $\pi \in S_3$, that is, $\pi$ is a permutation of $\{S, T, U\}$, then $(S^\pi,T^\pi,U^\pi)$ is a TPP triple of $G$ (permutation invariance) \textup{\citep[Lemma 2.1]{CU}}.
\vspace{1mm}
\item If $a,b,c,d \in G$ are any elements then $(dSa,dTb,dUc)$ is a TPP triple of $G$  (translation invariance) \textup{\citep[Observation 2.1]{Neu}}.
\end{enumerate}
\end{observation}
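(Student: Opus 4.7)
The plan is to treat the two parts separately, since they require quite different manipulations, and in both cases the work reduces to straightforward algebra on the defining TPP word.

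For part (1), I would exploit the fact that $S_3$ is generated by a $3$-cycle together with any transposition, so it suffices to verify invariance under one cyclic rotation and one swap. For the cyclic rotation, the key observation is the standard identity $xyz = 1 \iff yzx = 1$ (valid for any $x,y,z$ in any group). Setting $x = s's^{-1}$, $y = t't^{-1}$, $z = u'u^{-1}$, the TPP equation for $(S,T,U)$ transforms into the TPP equation for $(T,U,S)$, with the conclusion $s = s'$, $t = t'$, $u = u'$ unchanged. For a transposition, say $(S,T,U) \mapsto (U,T,S)$, I would take the inverse of the TPP equation, obtaining $u u'^{-1} t t'^{-1} s s'^{-1} = 1$, then relabel by swapping primed and unprimed variables throughout (a renaming, not a change of hypothesis, since the TPP assertion quantifies over all choices). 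This yields precisely the TPP equation for $(U,T,S)$, and the conclusion is again symmetric in the primed/unprimed pair.

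For part (2), I would verify the statement by direct expansion. Writing the candidate triple as $(dSa, dTb, dUc)$ and its elements as $ds a$, $ds' a$, etc., I would form the product
\[
(ds'a)(dsa)^{-1} \cdot (dt'b)(dtb)^{-1} \cdot (du'c)(duc)^{-1}.
\]
The $a$'s cancel inside the first factor, the $b$'s inside the second, and the $c$'s inside the third, leaving $d\, s's^{-1} t't^{-1} u'u^{-1} d^{-1}$, which is the conjugate of the original TPP word by $d$. Since conjugation sends $1$ to $1$ and is a bijection, this equals $1$ iff the original word does, and by TPP on $(S,T,U)$ that forces $s = s'$, $t = t'$, $u = u'$. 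Finally, since $x \mapsto d x a$ is a bijection of $G$, these equalities are equivalent to equality of the corresponding translated elements, which is exactly the TPP conclusion for $(dSa, dTb, dUc)$.

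Neither part presents a real obstacle; both are manipulations with the defining equation. The only mild subtlety is in part (1), where I should take care that the transposition-via-inversion step is justified not as a change of hypothesis but as a relabeling of the universally quantified variables in the TPP definition, and confirm that one transposition together with one $3$-cycle genuinely generates $S_3$ so that all six permutations are covered.
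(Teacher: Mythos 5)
Your proof is correct. The paper itself gives no proof of this Observation --- it simply cites \citep[Lemma 2.1]{CU} and \citep[Observation 2.1]{Neu} --- and your arguments (cyclic invariance of $xyz=1$ plus inversion-and-relabelling for the transposition, and the cancellation $(ds'a)(dsa)^{-1}(dt'b)(dtb)^{-1}(du'c)(duc)^{-1}=d\,s's^{-1}t't^{-1}u'u^{-1}d^{-1}$ for translation) are exactly the standard direct verifications found in those references.
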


The translation invariance property for TPP triples means that any non-basic TPP triple can be translated to a basic TPP triple of the same type and size as the original triple.  To be precise, if $(S,T,U)$ is any non-basic TPP triple of $G$, that is, when $1 \notin S \cap T \cap U$, then elements $s \in S$, $t \in T$, $u \in U$ can be chosen such that the right-translated triple $(Ss^{-1},Tt^{-1},Uu^{-1})$ is a basic TPP triple of $G$ with parameters $|Ss^{-1}|=|S|$, $|Tt^{-1}|=|T|$, $|Uu^{-1}|=|U|$.  This means that generally only basic TPP triples need be considered.  Of course, subgroup TPP triples are necessarily basic.

\vspace{1mm}

All further references to TPP triples will be to basic TPP triples, unless otherwise stated.

\vspace{1mm}

The following two elementary properties are useful to note, and will be used at several points.

\begin{observation}\label{SetClosureToSubgroups} Given a TPP triple $(S, T, U)$ of a group $G$ and a subgroup $H \leq G$ every triple of the form $(S' \cap H, T' \cap H, U' \cap H)$ for non-empty subsets $S' \subseteq S$, $T' \subseteq T$, $U' \subseteq U$, with $1 \in S' \cap T' \cap U'$, is a TPP triple of $H$.
\end{observation}
\begin{proof}If $G$ and $H$ are as given, and $(S, T, U)$ is a TPP triple of $G$, then for any non-empty subsets $S' \subseteq S$, $T' \subseteq T$, $U' \subseteq U$, with $1 \in S' \cap T' \cap U'$, define the subsets $S'_0 := S' \cap H$, $T'_0 := T' \cap H$, $U'_0 := U' \cap H$. Then $1 \in S'_0 \cap T'_0 \cap U'_0$ and $Q(S'_0) \subseteq Q(S)$, $Q(T'_0) \subseteq Q(T)$, $Q(U'_0) \subseteq Q(U)$, and $Q(S'_0) \cap Q(T'_0)Q(U'_0) = Q(T'_0) \cap Q(U'_0) = \{1\}$.
\end{proof}

\begin{proposition}\label{PairSizes}\textup{\citep[Lemma 3.1]{CU}} Let $(S,T,U)$ be a TPP triple of $G$.
\begin{enumerate}
\item If  $X,Y \in \{S,T,U\}$ and $X \neq Y$ then the mapping $(x,y) \longmapsto x^{-1}y$ on $X \times Y$ into $G$ is injective, and $|XY| = |X||Y| \leq |G|$, where the equality holds only if $Z = \{1\}$, where $Z \in \{S,T,U\}\backslash\{X,Y\}$.
\vspace{1mm}
\item If $G$ is abelian then the mapping $(s,t,u) \longmapsto s^{-1}tu$ on $S \times T \times U$ is injective into $G$, and $|S||T||U| \leq |G|$.
\end{enumerate}
\end{proposition}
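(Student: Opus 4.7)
My plan is to derive the injectivity of each map directly from the TPP equation \eqref{GeneralTPP}, using two techniques: \emph{padding} the relevant collision equation with identity factors supplied by basicity (via Observation \ref{Invariance}(2) I may WLOG assume $(S,T,U)$ is basic, so $1 \in S \cap T \cap U$), and, when the order of the factors does not match the fixed pattern $s's^{-1}t't^{-1}u'u^{-1}$ of \eqref{GeneralTPP}, an appeal to permutation invariance (Observation \ref{Invariance}(1)). Once injectivity is established, the size bound is immediate since the image lies in $G$.

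For part (1), I first invoke permutation invariance to take $X = S$, $Y = T$, $Z = U$. Given a collision $s_1^{-1}t_1 = s_2^{-1}t_2$, I rewrite it as $s_2 s_1^{-1} t_1 t_2^{-1} = 1$ and pad by $1 \cdot 1^{-1}$ drawn from $U$, matching \eqref{GeneralTPP} with $s' = s_2, s = s_1, t' = t_1, t = t_2, u' = u = 1$, which forces $s_1 = s_2$ and $t_1 = t_2$. Hence the image $S^{-1}T \subseteq G$ has $|S||T|$ elements. For the equality clause, $|S||T| = |G|$ forces $S^{-1}T = G$, so each $u \in U$ can be written as $s^{-1}t$, i.e.\ $s u t^{-1} = 1$; the factor order here is $S,U,T$, so I first permute to the TPP triple $(S,U,T)$, whose TPP equation reads $s's^{-1}u'u^{-1}t't^{-1} = 1$, and then pad $sut^{-1}$ as $s \cdot 1^{-1} \cdot u \cdot 1^{-1} \cdot 1 \cdot t^{-1}$. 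Matching factor-by-factor and applying the TPP forces $s = u = t = 1$; in particular the arbitrary $u \in U$ equals $1$, so $U = \{1\}$.

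For part (2), abelianness is exactly what is needed to rewrite any collision $s_1^{-1}t_1 u_1 = s_2^{-1}t_2 u_2$ into the form $s_2 s_1^{-1} t_1 t_2^{-1} u_1 u_2^{-1} = 1$, which is literally \eqref{GeneralTPP} and therefore forces $s_1 = s_2$, $t_1 = t_2$, $u_1 = u_2$. Hence $(s,t,u) \mapsto s^{-1}tu$ is injective into $G$ and $|S||T||U| \leq |G|$.

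The only slightly delicate step is the equality clause in part (1): the factor ordering in \eqref{GeneralTPP} is rigid, while the collision identity $sut^{-1} = 1$ has order $S,U,T$, so reconciling them requires a preliminary application of permutation invariance before the padding argument can proceed. Everything else reduces to padding the collision equation with identity elements from $S, T, U$ that basicity makes available, and then reading off the conclusion of \eqref{GeneralTPP}.
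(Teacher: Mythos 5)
Your proof is correct; the paper itself offers no proof of this proposition (it is quoted from Cohn and Umans, Lemma 3.1), and your padding-plus-permutation argument is exactly the standard one: pad the collision equation with trivial factors so that it becomes an instance of \eqref{GeneralTPP}, permuting the triple first when the factor order demands it. Two cosmetic remarks only: what you actually count is $|X^{-1}Y|$, the image of $(x,y)\mapsto x^{-1}y$, which is what the proposition's $|XY|$ is implicitly denoting; and the basicity you invoke is already guaranteed by the paper's standing convention that all TPP triples under discussion are basic, so the WLOG translation step is not even needed (alternatively, for the injectivity step one can simply take $u'=u$ for an arbitrary $u\in U$ rather than $u'=u=1$).
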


Part \textup{(2)} of Proposition \ref{PairSizes} means that only nonabelian groups can realise TPP triples of non-trivial size.

\section{Coset Decomposition of TPP Triples}

Some technical results needed for the main result are stated and proved.  These are based on the idea of decomposing a TPP triple of a given group into smaller TPP triples obtained by independently decomposing the members of the triple with the left (or right) cosets of a suitable subgroup of \emph{small index}, as first described by P. Neumann in \citep[Observation 4.1]{Neu}.

\begin{definition}\label{CosetDecomp}Let $G$ be a group, $H$ a subgroup, $G/H = \{gH \mid g \in G\}$ the collection of all (left) cosets of $H$ in $G$, and $|G:H|$ the index (or size) of $G/H$. For a non-empty subset $S \subseteq G$ let the \emph{$H$-support} of $S$ be the set $\overline{S} := \{gH \in G/H \mid S \cap gH \neq \emptyset \} \subseteq G/H$, that is, the set of all cosets of $H$ that intersect with $S$. By definition $gH \in \overline{S} \iff S_g := S \cap gH \neq \emptyset$ for any coset $gH \in G/H$. If $(S,T,U)$ is a TPP triple of $G$ let its \emph{restriction to $H$}, or, simply, \emph{$H$-restriction}, be the triple $(S_0,T_0,U_0)$, where $S_0 := S \cap H$, $T_0 := T \cap H$, $U_0 := U \cap H$, and this is a TPP triple of $H$.
\end{definition}

\begin{observation}\label{CosetM}Let $G$ be a group and $H$ a subgroup.

\vspace{1mm}

\textup{(1)} If $S$ is a subgroup of $G$ and $\overline{S}$ is its $H$-support then $|\overline{S}| = |S : S \cap H|$.

\vspace{1mm}

\textup{(2)} If $S$ is a subgroup of $G$ and $H$ is normal in $G$ then $\overline{S}$ is a subgroup of $G/H$.

\vspace{1mm}

\textup{(3)}  If $H$ is abelian and normal in $G$, and $(S,T,U)$ is a subgroup TPP triple of $G$ then
\begin{equation}
|S||T||U| \leq \frac{\sigma \tau \upsilon}{n}|G|
\end{equation}

where $n = |G:H|$, and $\sigma = |S:S \cap H|$, $\tau = |T:T \cap H|$, $\upsilon = |U:U \cap H|$.
\end{observation}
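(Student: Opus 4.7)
The plan is to dispose of parts (1) and (2) by straightforward applications of the second isomorphism theorem, and then to derive the bound in part (3) by combining a simple coset-count identity for each of $S$, $T$, $U$ with Observation \ref{SetClosureToSubgroups} and the abelian bound of Proposition \ref{PairSizes}(2).

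For part (1), I would consider the map $S/(S \cap H) \to G/H$ given by $s(S \cap H) \mapsto sH$. Even without using normality, this is a well-defined, injective map of coset spaces whose image is precisely $\overline{S}$, because a coset $gH$ meets $S$ if and only if $gH = sH$ for some $s \in S$. Hence $|\overline{S}| = |S : S \cap H|$. For part (2), when $H$ is normal in $G$, the product $SH$ is a subgroup of $G$ containing $H$ as a normal subgroup, so $\overline{S} = \{sH : s \in S\} = SH/H$ is a subgroup of $G/H$ by the second isomorphism theorem.

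For part (3), I would begin by decomposing each of $S$, $T$, $U$ into cosets of its intersection with $H$, which yields $|S| = \sigma|S \cap H|$, $|T| = \tau|T \cap H|$, $|U| = \upsilon|U \cap H|$, and hence
\begin{equation}
|S||T||U| = \sigma\tau\upsilon \cdot |S \cap H||T \cap H||U \cap H|.
\end{equation}
By Observation \ref{SetClosureToSubgroups}, the $H$-restriction $(S \cap H, T \cap H, U \cap H)$ is a (subgroup) TPP triple of $H$; since $H$ is abelian, Proposition \ref{PairSizes}(2) applied inside $H$ gives $|S \cap H||T \cap H||U \cap H| \leq |H| = |G|/n$. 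Substituting produces the claimed inequality $|S||T||U| \leq \frac{\sigma\tau\upsilon}{n}|G|$.

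There is no real obstacle here; the only genuine task is selecting which of the preceding results to invoke at each juncture. It is worth remarking in advance that normality of $H$ is used only in part (2), and abelianness of $H$ only in part (3) at the step where Proposition \ref{PairSizes}(2) is applied; the counting arguments in parts (1) and (3) themselves need $H$ only to be a subgroup.
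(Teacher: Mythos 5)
Your proposal is correct and follows essentially the same route as the paper: part (1) via a bijection between the cosets of $S \cap H$ in $S$ and the elements of $\overline{S}$ (the paper phrases this as partitioning $S$ into the non-empty sets $S \cap gH$, each of size $|S \cap H|$), part (2) via the second isomorphism theorem, and part (3) by combining the coset count from (1) with Observation \ref{SetClosureToSubgroups} and Proposition \ref{PairSizes}(2) applied inside the abelian subgroup $H$. Your closing remark about exactly where normality and abelianness are used is also accurate.
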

\begin{proof}Let $G$ and $H$ be given as above and $n = |G : H|$.

\vspace{1mm}

\textup{(1)} Let $S \leq G$ and $\overline{S}$ be its $H$-support as defined above. Let $gH \in \overline{S}$ and $S_g := S \cap gH$. By definition $S_g \neq \emptyset$. If $x \in S_g$ then $|S_g| = |S \cap gH| = |xS \cap xH| = |x(S \cap H)| = |S \cap H|$, where $x(S \cap H)$ is a coset of $S \cap H \leq S$. As the cosets $gH \in G/H$ are disjoint so are the sets $S_g$, which thus form an equal-sized partition of $S$. If $\sigma = |\overline{S}|$ then $|S| = \sigma|S \cap H|$, i.e. $\sigma = |S: S \cap H|$. The same result holds if left cosets are replaced by right cosets.

\vspace{1mm}

\textup{(2)} Let $S \leq G$. If $H \trianglelefteq G$ then $G/H$ is a (quotient) group and using the Second Isomorphism Theorem $SH \leq G$ and $\overline{S} = SH/H \leq G/H$, where $SH/H \cong S/(S\cap H)$.

\vspace{1mm}

\textup{(3)} Let $H \trianglelefteq G$ be abelian, and $(S,T,U)$ be a subgroup TPP triple of $G$.  Then by Observation \ref{SetClosureToSubgroups} $(S \cap H, T \cap H, U \cap H)$ is a subgroup TPP triple of $H$.  If $S_0 := S \cap H$, $T_0 := T \cap H$, $U_0 := U \cap H$, and $\overline{S}, \overline{T}, \overline{U}$ are the $H$-supports of $S, T, U$ respectively, then by part (1) above $\sigma = |\overline{S}| = |S: S_0|$, $\tau = |\overline{T}| = |T: T_0|$, $\upsilon = |\overline{U}| = |U: U_0|$, where $\overline{S}, \overline{T}, \overline{U} \leq G/H$. Also, by assumption $H$ is abelian, so by Proposition \ref{PairSizes} it follows that $|S_0T_0U_0|=|S_0||T_0||U_0| = \frac{|S||T||U|}{\sigma \tau \upsilon} \leq |H|$, which shows that $|S||T||U| \leq \sigma 
\tau \upsilon |H| = \frac{\sigma\tau\upsilon}{n}|G|$. Note here that $\sigma, \tau, \upsilon$ are divisors of $n$, and of $|S|, |T|, |U|$ respectively, so $\sigma \leq \text{min}\{n, |S|\}$, $\tau \leq \text{min}\{n, |T|\}$, $\upsilon \leq \text{min}\{n, |U|\}$.
\end{proof}

Note that $\textup{(3.1)}$ can also be derived for the more general case of subset TPP triples, leading to the general upper bound
\begin{align}
|S||T||U| \leq n^2|G|
\end{align}
for the size of any TPP triple $(S, T, U)$ of a group $G$ with an abelian subgroup of index $n$ \citep[Corollary 4.2]{Neu}.

\begin{lemma}\label{DisjTPPSets}Let $G$ be a group, $H$ an abelian normal subgroup, $G/H$ the quotient group, and $(S,T,U)$ a subgroup TPP triple of $G$. Furthermore, let the $H$-supports $\overline{S}$, $\overline{T}$, $\overline{U}$ be defined as in Observation \ref{CosetM}, where these are subgroups of $G/H$, the subgroups $S_0 := S \cap H$, $T_0 := T \cap H$, $U_0 := U \cap H$, and the sets $S_x := S \cap xH$, $T_y := T \cap yH$, $U_z := U \cap zH$ for cosets $xH, yH, zH \in G/H$.

\vspace{1mm}

\textup{(1)} If $xH \in \overline{S} \cap \overline{T}$, then the set $S_x^{-1}T_xU_0$ is a coset of $S_0T_0U_0$ in $H$, and, moreover, non-trivial if $xH$ is non-trivial.

\vspace{1mm}

\textup{(2)} If $xH, yH \in \overline{S} \cap \overline{T}$ are distinct, \emph{and} $U_0$ is normal in $G$, then the cosets $S_x^{-1}T_xU_0$ and $S_y^{-1}T_yU_0$ are distinct.

\vspace{1mm}

\textup{(3)} If $xH \in \overline{S} \cap \overline{T}$ and $yH \in \overline{S} \cap \overline{U}$, and at least one of them is non-trivial, then the cosets $S_x^{-1}T_xU_0$ and $S_y^{-1}U_yT_0$ are distinct.
\vspace{1mm}

\end{lemma}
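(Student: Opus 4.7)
The plan is uniform across all three parts: realise each set in the claim as a coset of $S_0 T_0 U_0$ in $H$, then deduce distinctness by assuming a common element and deriving a subgroup-TPP relation $s^{\ast} t^{\ast} u^{\ast} = 1$ that contradicts the hypothesis via Theorem \ref{HM}. The preliminary coset identification uses that for any $gH \in \overline{X}$ and any choice $x \in X_g$ one has $X_g = x X_0$ (as in the proof of Observation \ref{CosetM}\textup{(1)}), and abelianness of $H$ then gives $S_x^{-1} T_x U_0 = (s^{-1}t)(S_0 T_0 U_0)$ whenever $s \in S_x$ and $t \in T_x$; for the second set of \textup{(2)} the same rewriting applies directly, while for the second set of \textup{(3)} one first invokes permutation invariance of subgroup TPP triples (Observation \ref{Invariance}\textup{(1)}) applied to $(S,U,T)$ to read off $S_y^{-1} U_y T_0 = (s_2^{-1} u_2)(S_0 T_0 U_0)$ (assuming $yH \in \overline{S}$; otherwise this set is empty and distinctness is trivial).

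For part \textup{(1)}, an assumed equality $s^{-1} t = s_0 t_0 u_0$ rearranges inside abelian $H$ to $(s s_0)^{-1}(t t_0^{-1}) u_0^{-1} = 1$, an element of $S \cdot T \cdot U$ equal to the identity. The subgroup TPP forces each factor to be trivial, so $s \in S_0 \subseteq H$, contradicting $xH \neq H$.

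For part \textup{(2)}, a common element $s_1^{-1} t_1 u_1 = s_2^{-1} t_2 u_2$ (with $s_1 \in S_x$, $s_2 \in S_y$, $t_1 \in T_x$, $t_2 \in T_y$, $u_1, u_2 \in U_0$) rearranges to $(s_2 s_1^{-1}) \cdot t_1 (u_1 u_2^{-1}) t_2^{-1} = 1$. The middle block splits as $(t_1 t_2^{-1}) \cdot t_2 (u_1 u_2^{-1}) t_2^{-1}$, and the hypothesis $U_0 \trianglelefteq G$ keeps $t_2 (u_1 u_2^{-1}) t_2^{-1}$ inside $U_0 \subseteq U$. The identity therefore reads $(s_2 s_1^{-1}) \cdot (t_1 t_2^{-1}) \cdot u^{\ast} = 1$ with factors in $S$, $T$, $U$ respectively, and TPP forces $s_1 = s_2$, contradicting $s_1 \in xH \neq yH \ni s_2$.

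Part \textup{(3)} follows the same recipe and, pleasantly, avoids needing $U_0 \trianglelefteq G$. A common element $s_1^{-1} t_1 u_1 = s_2^{-1} u_2 t_0$ (with $s_2 \in S_y$, $u_2 \in U_y$, $t_0 \in T_0$) rearranges to $(s_2 s_1^{-1}) \, t_1 \, u_1 \, t_0^{-1} \, u_2^{-1} = 1$; the two internal factors $u_1$ and $t_0^{-1}$ that must be brought together both live inside the abelian $H$, so they commute outright, and the relation regroups into $(s_2 s_1^{-1}) \cdot (t_1 t_0^{-1}) \cdot (u_1 u_2^{-1}) = 1$ in $S \cdot T \cdot U$. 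TPP again forces $s_1 = s_2$, contradicting $xH \neq yH$. The main obstacle in the whole argument is the single step in \textup{(2)} where one must push a $U$-factor past a $T$-factor outside $H$ and so genuinely needs $U_0$ to be normal in $G$; part \textup{(3)} is easier precisely because the two corresponding factors sit inside the abelian $H$ from the outset.
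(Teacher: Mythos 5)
Your proof is correct and follows essentially the same strategy as the paper's: identify each set as a translate of $S_0T_0U_0$ in $H$, then refute a hypothetical common element by rearranging it (using abelianness of $H$, and in (2) the normality of $U_0$ in $G$) into a relation $s^{\ast}t^{\ast}u^{\ast}=1$ that the subgroup TPP forces to be trivial. The only differences are cosmetic — e.g.\ in (1) you apply the TPP to $(S,T,U)$ where the paper uses the permuted triple $(S,U,T)$ — and your explicit handling of the case $yH \notin \overline{S}$ in (3) is a welcome clarification, since the lemma's statement of (3) reads $yH \in \overline{T} \cap \overline{U}$ while the paper's own proof (and the intended claim) assumes $yH \in \overline{S} \cap \overline{U}$.
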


\begin{proof} Let $H$ be as given ($H$ is abelian and normal in $G$). Several basic facts used in the proof are recalled for convenience:

\begin{itemize}
\item The inverse of any coset $xH = Hx \in G/H$ is $(xH)^{-1} = x^{-1}H = Hx^{-1}$, and the inverse of any non-empty subset $B_x \subseteq xH$ is $B_x^{-1} \subseteq x^{-1}H = Hx^{-1}$ (by the normality of $H$ in $G$).
\item If $x_1, x_2 \in xH$ are any coset elements then $x_1x_2^{-1} \in H$ (if $x_1 = xh_1$ and $x_2 = xh_2$ for some $h_1, h_2 \in H$ then $x_1x_2^{-1} = xh_1h_2^{-1}x^{-1} = xx^{-1}h_3 = h_3 \in H$ for some $h_3 \in H$ (by the normality of $H$ in $G$).
\item Every permutation $(S^\pi, T^\pi, U^\pi)$ of the TPP triple $(S, T, U)$ of $G$, for $\pi \in S_3$, is also a TPP triple of $G$ (permutation invariance, Observation \ref{Invariance}).
\item The subgroups $S_0, T_0, U_0$ as defined above, which satisfy the TPP, are normal in $S, T, U$ respectively, and $SH/H \cong S/S_0$, $TH/H \cong T/T_0$, $UH/H \cong U/U_0$ (Second Isomorphism Theorem). This means that translates of the form $S_0s_x = s_xS_0, T_0t_y = t_yT_0, U_0u_z = u_zU_0$, for elements $s_x \in S \cap xH$, $t_y \in T \cap yH$, $u_z \in U \cap zH$ and cosets $xH, yH, zH \in G/H$, are cosets of $S_0, T_0, U_0$ respectively in $S, T, U$ respectively that intersect with the cosets $xH, yH, zH$ respectively, and contain the elements $s_x, t_y, u_z$ respectively, that is, $s_xS_0 = S_0s_x = S_x$, $t_yT_0 = T_0t_y = T_y$, $u_zU_0 = U_0u_z = U_z$.
\item As $H$ is abelian, the subgroups $S_0$, $T_0$, and $U_0$, are also normal in $H$ and thus form a set product $S_0T_0U_0$ that is a subgroup of $H$ of order $|S_0T_0U_0| = |S_0||T_0||U_0| \leq |H|$ (using the injective triple product map on $S_0 \times T_0 \times U_0$ into $H$ from Proposition \ref{PairSizes}).
\end{itemize}

\vspace{1mm}

\textup{(1)} Let $xH \in \overline{S} \cap \overline{T}$. Then, by definition, the sets $S_x, T_x$ are non-empty, and elements $s_x \in S_x$, $t_x \in T_x$ can be chosen.  As $s_x^{-1}t_x \in H$ a translate of $S_0T_0U_0$ in $H$ can be formed, which is $s_x^{-1}t_x(S_0T_0U_0) = S_0s_x^{-1}t_xT_0U_0 = S_x^{-1}T_xU_0$ and thus a coset of $S_0T_0U_0$ in $H$. Its size is $|S_x^{-1}T_xU_0| = |s_x^{-1}t_x(S_0T_0U_0)| = |S_0T_0U_0| = |S_0||T_0||U_0| \leq |H|$. If $xH \neq H$ (if $x \in G\backslash H$) then the coset $S_x^{-1}T_xU_0$ is non-trivial, that is, $S_x^{-1}T_xU_0 \cap S_0T_0U_0 = \emptyset$ if $xH \neq H$. To see this, suppose there are elements $s_x \in S_x$, $t_x \in T_x$, $u_0 \in U_0$ and $s_0 \in S_0$, $t_0 \in T_0$, $\widetilde{u}_0 \in U_0$, such that $s_x^{-1}t_xu_0 = s_0^{-1}t_0\widetilde{u}_0$. This can be rearranged as $s_xs_0^{-1}t_0\widetilde{u}_0u_0^{-1}t_x^{-1} = 1$. As $t_0$ and $\widetilde{u}_0u_0^{-1}$ commute in $H$ this can be further rearranged as $s_xs_0^{-1}\widetilde{u}_0u_0^{-1}t_0t_x^{-1} = 1$. However the TPP for $(S, U, T)$ implies that $s_0 = s_x$ and $t_0 = t_x$, which is a contradiction if $xH \neq H$.

\vspace{1mm}

Note here that by transposing $T$ and $U$ it can be shown that there are cosets of the form $s_x^{-1}u_x(S_0T_0U_0) = s_x^{-1}u_x(S_0U_0T_0) = S_x^{-1}U_xT_0$ in $H$, and which are non-trivial if $xH$ is non-trivial.

\vspace{1mm}

\textup{(2)} Let $xH, yH \in \overline{S} \cap \overline{T}$ be distinct, and let $U_0 \trianglelefteq G$. As in part \textup{(1)} elements $s_x \in S_x$, $t_x \in T_x$, $s_y \in S_y$ and $t_y \in T_y$ can be chosen to form cosets $s_x^{-1}t_x(S_0T_0U_0) = S_x^{-1}T_xU_0$ and $s_y^{-1}t_y(S_0T_0U_0) = S_y^{-1}T_yU_0$ of $S_0T_0U_0$ in $H$, but these are distinct by the distinctness of $xH, yH$. To see this, suppose there are elements $s_x \in S_x$, $t_x \in T_x$, $s_y \in S_y$, $t_y \in T_y$, $u_0, \widetilde{u}_0 \in U_0$ such that $s_x^{-1}t_xu_0 = s_y^{-1}t_y\widetilde{u}_0$. This can be rearranged as $s_xs_y^{-1}t_y\widetilde{u}_0u_0^{-1}t_x^{-1} = 1$. As $U_0$ is normal in $G$ there is a $\hat{u_0} \in U_0$ such that $\widetilde{u}_0u_0^{-1}t_x^{-1} = t_x^{-1}\hat{u}_0$. Substituting this into the equation above implies that $s_xs_y^{-1}t_yt_x^{-1}\hat{u}_0 = 1$. However the TPP for $(S, T, U)$ implies that $s_x = s_y$, $t_x = t_y$, a contradiction.

\vspace{1mm}

Note here that it is necessary to assume the cosets $xH, yH$ are distinct, and that $U_0$ is normal in $G$, unlike in other parts of the proof. Also note that by transposing $T$ and $U$ in the argument above, it can be shown that cosets of the form $S_x^{-1}U_xT_0$ and $S_y^{-1}U_yT_0$ are distinct if $xH, yH \in \overline{S} \cap \overline{U}$ are distinct, and $T_0$ is normal in $G$.

\vspace{1mm}

\textup{(3)} Suppose $xH \in \overline{S} \cap \overline{T}$ and $yH \in \overline{S} \cap \overline{U}$, and that $xH$ is non-trivial, that is, $xH \neq H$. As above there are cosets $S_x^{-1}T_xU_0$ and $S_y^{-1}U_yT_0$ of $S_0T_0U_0$ in $H$, but these are distinct. To see this, suppose there are elements $s_x \in S_x$, $ s_y \in S_y$, $t_x \in T_x$, $u_y \in U_y$, $t_0 \in T_0$, and $u_0 \in U_0$ such that $s_x^{-1}t_xu_0 = s_y^{-1}u_yt_0$, or $s_xs_y^{-1}u_yt_0u_0^{-1}t_x^{-1} = 1$.  Since $t_0$ and $u_0^{-1}$ commute in $H$ this can be rewritten as $s_xs_y^{-1}u_yu_0^{-1}t_0t_x^{-1} = 1$.  However the TPP for $S, U, T$ implies that $t_x = t_0$, a contradiction if $xH \neq H$. In the same way, the assumption that $yH$ is non-trivial would also lead to a similar contradiction. Note here that there is no need to assume distinctness of the cosets $xH, yH$, only that at least one of them is non-trivial.

\end{proof}

\section{The Main Result for Subgroup TPP Ratio}

\begin{theorem}\label{MABQuoPrime}If $G$ is a group with an abelian normal subgroup $H$ of prime index $p$ then
\begin{equation}
\rho_0(G) \leq \frac{p^2}{2p-1}
\end{equation}

where equality $\rho_0(G) = \frac{p^2}{2p-1}$ implies that $2p -1$ divides $|H|$, and $|G|=p(2p-1)m$ where $m$ is the order of the proper subgroup of $H$ associated with the $H$-restriction of the subgroup TPP triple of $G$ through which the equality is achieved.
\end{theorem}
\begin{proof} Let $G$ and the prime $p$ be as given. Let $H$ be an abelian normal subgroup of index $p$ (thus making it maximal in $G$).  Then $G/H$ is a cyclic group of order $p=|G:H| = \frac{|G|}{|H|}$ whose elements are $p$ (left) cosets $xH$, for $x \in G$. Let  $(S,T,U)$ be a subgroup TPP triple of $G$ with $H$-supports $\overline{S},\overline{T},\overline{U}$ as defined in Observation \ref{CosetDecomp}. By part (2) of that Observation $\overline{S}, \overline{T}, \overline{U} \leq G/H$, and letting $\sigma = |\overline{S}|$, $\tau = |\overline{T}|$, $\upsilon = |\overline{U}|$ then $\sigma, \tau, \upsilon \in \{1, p\}$. The sets $S_x, T_y, U_z$, as defined in Lemma \ref{DisjTPPSets} will also be used at certain points: these are defined as $S_x := S \cap xH$, $T_y := T \cap yH$, $U_z := U \cap zH$ for any $xH, yH, zH \in G/H$. By definition, if $xH \in G/H$ then $xH \in \overline{S} \iff S_x \neq \emptyset$, and there are analogues for $T_y$ and $U_z$.

\vspace{1mm}

Let $S_0 := S \cap H$, $T_0 := T \cap H$, $U_0 := U \cap H$. Then $(S_0, T_0, U_0)$ is a subgroup TPP triple of $H$ (by Observation \ref{SetClosureToSubgroups}) of size $|S_0||T_0||U_0| = |S_0T_0U_0| = \frac{|S||T||U|}{\sigma\tau\upsilon} \leq |H|$ and $|S||T||U| \leq \frac{\sigma\tau\upsilon}{p}|G|$ (by Proposition \ref{PairSizes} and Observation \ref{CosetM}). Note that $S_0T_0U_0 \trianglelefteq H$ (as $H$ is abelian), and by the Second Isomorphism Theorem, $S_0 \trianglelefteq S$, $T_0 \trianglelefteq T$, and $U_0 \trianglelefteq U$.

\vspace{1mm}

Observation \ref{CosetM} implies the general bound $|S||T||U| \leq \frac{\sigma \tau \upsilon}{p}|G|$ applies, independently of any special assumptions about $|S||T||U|$. If at least two of $\sigma, \tau, \upsilon$ are equal to $1$ then $|S||T||U| \leq \frac{\sigma \tau \upsilon}{p}|G| \leq |G|$, and there is nothing to prove. So to achieve $|S||T||U| > |G|$ there are only two cases to consider: \textup{(i)} $\sigma\tau\upsilon = p^2$ or \textup{(ii)} $\sigma\tau\upsilon = p^3$. 

\vspace{1mm}

\textup{(*)} Now the special assumption is introduced (for which a contradiction is sought): suppose $|S||T||U| > \frac{p^2}{2p-1}|G| = \frac{p^3}{2p-1}|H|$.

\vspace{1mm}

Consider case \textup{(i)}: let $\sigma = \tau = p$ and $\upsilon = 1$, without loss of generality (permutation invariance of $S, T, U$ by Observation \ref{Invariance}), which means that $S$ and $T$ are supported on all $p$ cosets of $H$, and $U$ only on $H$ ($U = U_0$). Then $|S_0||T_0||U_0| = \frac{|S||T||U|}{p^2} > \frac{p}{2p-1}|H|$ and $\frac{|H|}{|S_0||T_0||U_0|} = |H:S_0T_0U_0| < \frac{2p-1}{p} = 2 - \frac{1}{p} < 2$, that is,  $|H:S_0T_0U_0| = 1$ and $S_0T_0U_0 = H$. However, from the $p - 1$ non-trivial cosets of $H$ on which $S$ and $T$ are commonly supported any coset $xH$ and elements $s_x \in S_x$, $t_x \in T_x$ can be chosen to form a coset $s_x^{-1}t_x(S_0T_0U_0) = S_x^{-1}T_xU_0$ of $S_0T_0U_0$ in $H$. And, by Lemma \ref{DisjTPPSets} part \textup{(1)} this is non-trivial, that is, $S_x^{-1}T_xU_0 \neq S_0T_0U_0$, a contradiction. This argument also shows that when only two of the subgroups $S, T, U$ are supported on all $p$ cosets of $H$ then $|S||T||U| < \frac{p^2}{2p-1}|G|$ and $\rho_0(G)$ can never achieve $\frac{p^2}{2p-1}$.

\vspace{1mm}

Consider case \textup{(ii)}, that is, $\sigma = \tau = \upsilon = p$, meaning that $S, T, U$ are supported on all $p$ cosets of $H$, or, equivalently, $\overline{S} = \overline{T} = \overline{U} = G/H$ and the sets $S_x, T_y, U_z$, as defined above, are all non-empty. Importantly, here, the normality of $T_0$ and $U_0$ in $G$ is necessary, and, indeed, follows from the conditions here: to see this, note that as $T_0$ is normal in both $T$ and $H$ it is normal in the subgroup $\langle T,  H \rangle$ generated by them, and as $T$ is not contained in $H$, where $H$ is maximal in $G$, then $\langle T,  H \rangle = G$. In the same way, $U_0$ is normal in $G$.

\vspace{1mm}

Now, $|S_0||T_0||U_0| = \frac{|S||T||U|}{p^3} > \frac{1}{2p-1}|H|$, that is, $|H:S_0T_0U_0| < 2p - 1$ and $S_0T_0U_0$ has less than $2p - 1$ cosets in $H$. However, using Lemma \ref{DisjTPPSets} parts \textup{(2)-(3)} two collections of $p - 1$ non-trivial cosets each of $S_0T_0U_0$ in $H$ can be formed, namely, $\mathcal{A} = \{S_x^{-1}T_xU_0\}_{xH \in G/H \backslash \{H\}}$ and $\mathcal{B} = \{S_y^{-1}U_yT_0\}_{yH \in G/H \backslash \{H\}}$, that are mutually distinct, that is, $S_x^{-1}T_xU_0 \neq S_y^{-1}U_yT_0$ for any pair $S_x^{-1}T_xU_0 \in \mathcal{A}$, $S_y^{-1}U_yT_0 \in \mathcal{B}$. And this implies that the total number of cosets of $S_0T_0U_0$ in $H$, including the trivial coset $S_0T_0U_0$, is equal to $1 + 2(p - 1) = 2p - 1$, a contradiction.

\vspace{1mm}

Thus \textup{(*)} cannot hold and so $|S||T||U| \leq \frac{p^2}{2p-1}|G|$, as claimed.

\vspace{1mm}

For the final part of the theorem, suppose $|S||T||U| = \frac{p^2}{2p-1}|G|$. Then $|S_0||T_0||U_0| = \frac{1}{2p-1}|H|$ and $|H:S_0T_0U_0| = 2p-1$ (so $2p-1 \mathrel | |H|$ also). As $S_0T_0U_0 \leq G$ then $|G:S_0T_0U_0| = |G:H||H:S_0T_0U_0| = p(2p-1)$ and $|G| = p(2p-1)|S_0T_0U_0| = p(2p-1)|S_0||T_0||U_0|$.
\end{proof}

Note that the upper bound $\frac{p^2}{2p-1}$ here for $\rho_0$ is an improvement by a factor of $\frac{1}{2p-1}$ on the general upper bound of $p^2$ noted previously which holds when $G$ contains any abelian subgroup of index $p$ (note that $\frac{p^2}{2p-1} \longrightarrow \left(\frac{1}{2}p\right)^+$ as $p \longrightarrow \infty$).

\begin{example}\label{DihedralGroupExm}
A simple illustration of the theorem (and the idea of coset decomposition) is given below using the dihedral group $D_{2n}$ of order $2n$ where $n$ is divisible by $3$, that is, $n=3m$ for some positive integer $m$ (so that $|D_{2n}| = 6m$), and the equality $\rho_0(G) = \frac{p^2}{2p-1}$ is achieved for $p = 2$.

\vspace{1mm}

Using the standard presentation $\langle r, f \mathrel | r^n = r^{3m} = 1, f^2 = 1, frf = r^{-1} \rangle$ of $D_{2n}$, and letting $H$ be the cyclic subgroup $\langle r \rangle = \{r^0 = 1, r,\ldots,r^{3m-1}\}$ of order $n = 3m$ (and index $2$), and $fH = \{f, fr,\ldots, fr^{3m-1}\}$ its non-trivial coset, consider the subgroups 

\begin{align*}
S &= \langle r^3, f \rangle = \{r^{3i}f^j \mathrel | i \in \mathbb{Z}_m, j \in \mathbb{Z}_2\} \\
T &= \langle fr \rangle = \{1, fr\} \\
U &= \langle fr^2 \rangle = \{1, fr^2\}
\end{align*}

These subgroups are generated by the set $\{r^3, f\}$ and the involutions $fr$ and $fr^2$ respectively. The largest of these, $S$, is generated by two elements, one, $r^3$, of order $\frac{1}{3}n = m$, and the other, $f$, an involution, and so forms of a subgroup of order $\frac{2}{3}n = 2m$. Clearly, $T \cap U = \{1\}$, and it is easy to check that $TU = \{1, fr, fr^2, frfr^2 = r^{-1}r^2\} = \{1, fr, fr^2, r\}$ and $S \cap TU = \{1\}$. Thus $(S, T, U)$ is a (subgroup) TPP triple of $D_{2n}$ of type $(\frac{2}{3}n, 2, 2) = (2m, 2, 2)$ and size $\frac{8}{3}n = 8m$, thus achieving $\rho_0(D_{2n}) = \frac{4}{3}$.
\end{example}

\vspace{1mm}

There are subgroups $S_0 = S \cap H = \langle r^3 \rangle$, $T_0 = T \cap H = \{1\}$, $U_0 = U \cap H = \{1\}$, which means that the subgroup $S_0T_0U_0$ is just $S_0$ and is cylic of order $m$. It has $3$ cosets in $H$, namely, $S_0$, $rS_0 = r\langle r^3 \rangle$, $r^2S_0 = r^2\langle r^3 \rangle$, and $3$ cosets in $G \backslash H$ (that is, $fH$), which are $fS_0 = f\langle r^3 \rangle = S_1$, $frS_0 = fr\langle r^3 \rangle$, $fr^2S_0 = fr^2\langle r^3 \rangle$. The non-trivial cosets of $T_0$ and $U_0$ are $T_1 = \{fr\}$ and $U_1 = \{fr^2\}$ respectively. There is a coset decomposition of the size of $(S, T, U)$ in terms of $S_0T_0U_0 = S_0$ given by

\begin{align*}|S||T||U| &= (|S_0| + |S_1|)(|T_0| + |T_1|)(|U_0| + |U_1|) \\
                        &= \sum_{0 \leq i, j, k \leq 1}|S_i||T_j||U_k| \\
                        &= \sum_{0 \leq i, j, k \leq 1} |s_i^{-1}t_ju_k(S_0T_0U_0)| \\
                        &= \sum_{0 \leq i, j, k \leq 1} |S_0||T_0||U_0| \\
                        &= \sum_{0 \leq i, j, k \leq 1} |S_0| \\
                        &= 8m = \frac{8}{3}n = \frac{4}{3}|D_{2n}|
\end{align*}

where $|S_i||T_j||U_k| = |S_i| = |S_0|$ is the size of the TPP triple $(S_i, T_j, U_k)$, and elements $s_i \in S_i$, $t_j \ \in T_j$, $u_k \in U_k$ can be chosen appropriately for $i, j, k \in \{0, 1\}$. There are thus $8$ of these, each contributing $m$ to the size of $|S||T||U|$.

\vspace{1mm}

A simple corollary is noted for some special cases.

\begin{corollary}
\textup{(1)} If $G$ is a group with an abelian normal subgroup $H$ of prime index $p$ and $(2p - 1) \nmid |G|$, that is, $(2p - 1)$ does not divide $|G|$, then $\rho_0(G) \leq \frac{1}{2}p$. \textup{(2)} If $G$ is a $p$-group with an abelian subgroup of index $p$ then $\rho_0(G) = 1$.
\end{corollary}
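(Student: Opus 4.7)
The plan is to reuse the three-case analysis from the proof of Theorem \ref{MABQuoPrime}, now sharpened by the arithmetic hypotheses. Recall that the proof distinguishes cases according to how many of the $H$-support sizes $\sigma, \tau, \upsilon$ equal $p$ (the others equalling $1$), and in each case controls $|S||T||U| = \sigma\tau\upsilon\,|S_0T_0U_0|$ via a lower bound on $[H:S_0T_0U_0]$.

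For part (1), the cases with at most one of the supports equal to $G/H$ are easy: Observation \ref{CosetM}(3) already gives $|S||T||U| \leq |G| \leq \tfrac{p}{2}|G|$. When exactly two supports equal $G/H$, Lemma \ref{DisjTPPSets}(1) forces $S_0T_0U_0 \neq H$, so $[H:S_0T_0U_0] \geq 2$, and the resulting $|S||T||U| \leq p^2|H|/2 = \tfrac{p}{2}|G|$ is exactly what is needed. The interesting case is $\sigma = \tau = \upsilon = p$, where the theorem's proof already gives $[H:S_0T_0U_0] \geq 2p-1$. The extra ingredient I would add is that $\gcd(p,2p-1) = 1$, so the hypothesis $(2p-1)\nmid|G|$ is equivalent to $(2p-1)\nmid|H|$; since $[H:S_0T_0U_0]$ divides $|H|$, it cannot equal $2p-1$, and therefore jumps up to at least $2p$. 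Substituting yields $|S||T||U| \leq p^3|H|/(2p) = \tfrac{p}{2}|G|$.

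For part (2), I would first note that $H$ is automatically normal in $G$, as every subgroup of index $p$ in a $p$-group is normal, so Theorem \ref{MABQuoPrime} applies. Both $|H|$ and every subgroup index in $H$ are now powers of $p$. Running through the same three cases, the bounds $[H:S_0T_0U_0] \geq 2$ (when exactly one support is trivial) and $[H:S_0T_0U_0] \geq 2p-1$ (when none is) are promoted to $\geq p$ and $\geq p^2$ respectively, using that $p$ is the smallest prime power $\geq 2$ and that $p^2$ is the smallest power of $p$ which is $\geq 2p-1$ (equivalent to the trivial inequality $(p-1)^2 \geq 0$). Substituting these tighter lower bounds gives $|S||T||U| \leq |G|$ in all three cases; combined with the universal lower bound $\rho_0(G) \geq 1$ from the trivial triple $(G,\{1\},\{1\})$, this yields $\rho_0(G) = 1$.

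I do not foresee a real obstacle: both parts are arithmetic refinements of the case analysis already embedded in the proof of Theorem \ref{MABQuoPrime}. The only bits of content beyond that proof are the coprimality $\gcd(p,2p-1) = 1$, the elementary inequality $p^2 \geq 2p-1$, and the standard fact that subgroups of index $p$ in a $p$-group are normal.
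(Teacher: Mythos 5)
Your proposal is correct and is essentially the paper's argument: both parts are deduced from Theorem \ref{MABQuoPrime} by observing that the relevant indices are integers dividing $|H|$, so that $[H:S_0T_0U_0]$ must jump from $2p-1$ to $2p$ in part (1) and to a power of $p$ (at least $p^2$, since $p^2 \geq 2p-1$) in part (2). The only cosmetic difference is that the paper argues part (2) via the global fact that $|S||T||U|$ is a $p$-power strictly less than $p|G|$ rather than re-running the case analysis, and your part (1) usefully makes explicit the discrete-jump step that the paper passes over when it moves from the strict inequality $|S||T||U| < \frac{p^2}{2p-1}|G|$ to $|S||T||U| \leq \frac{p^2}{2p}|G|$.
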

\begin{proof}\textup{(1)} By Theorem \ref{MABQuoPrime}, $|S||T||U| \leq \frac{p^2}{2p-1}|G| = \frac{p^3}{2p-1}|H|$ holds for any subgroup TPP triple $(S, T, U)$ of $G$, where $G$ and $H$ are given as above. If $(2p - 1) \nmid |G|$ then $(2p - 1) \nmid |H|$ and $|S||T||U| < \frac{p^2}{2p - 1}|G|$, which implies that $|S||T||U| \leq \frac{p^2}{2p}|G| = \frac{1}{2}p|G|$. This shows that if $2p - 1$ does not divide $|G|$ then $\rho_0(G) = 1$ when $p = 2$, and $\rho_0(G) > 1$ is possible only if $p \geq 3$.

\vspace{1mm}

\textup{(2)} Let $G$ be a (finite) $p$-group with an abelian subgroup $H$ of index $p$. Then $H$ is maximal in $G$, and thus normal \citep[Proposition 3, p.73]{AB}. And the size of any subgroup TPP triple of $G$ is a $p$-power. By Theorem \ref{MABQuoPrime} if $(S, T, U)$ is a subgroup TPP triple of $G$, then $|S||T||U|$ must be a $p$-power such that $|S||T||U| < \frac{p^2}{2p - 1}|G| < p|G|$, that is, $|S||T||U| \leq |G|$.
\end{proof}

The contrapositive of this corollary explains the non-trivial $\rho_0$ values computed by Hedtke and Murthy for all the groups listed in \citep[Tables 1-4]{HM} that achieve $\rho_0 > 1$ or $\rho_0 > \frac{1}{2}p$, and this has been verified by GAP computations in these groups. It also explains the trivial $\rho_0$ values listed in those tables for the $p = 2$ case where either $2\cdot2 - 1 = 3$ does not divide the order of an abelian subgroup of index $2$, as in the case of the dihedral groups $D_{2n}$ where $3$ does not divide $n$, or the group is a $2$-group containing an abelian subgroup of index $2$.

\section{Concluding Remarks}

In relation to the more general notion of TPP ratio $\rho$ the theorem means that in groups with abelian normal subgroups of prime index $p$ no three subgroups can realise the TPP with a triple size exceeding $\frac{p^2}{2p-1}|G|$. To achieve the latter one must look for triples of subsets at least one of which is not a subgroup.

\vspace{1mm}

In the case of $p = 2$ and the dihedral groups $D_{2n}$ the theorem proves that $\rho_0(D_{2n}) \leq \frac{4}{3}$ \citep[Conjecture 7.5]{HM}. The data for TPP triples in groups of order $\leq 32$ obtained via exhaustive computational search \citep[Table 1]{HM} shows that none of those groups realise $\rho > \frac{4}{3}$ if they contain a cyclic subgroup of index $2$, and this led to the original conjecture that $\rho \leq \frac{4}{3}$ for groups with cyclic subgroups of index $2$ \citep[Conjecture 7.6]{HM}.

\vspace{1mm}

The smallest known (nonabelian) group containing a cyclic normal subgroup of index $p > 2$ and a known $\rho > 1$ is one of type $C_7 \rtimes C_3$ (GAP ID [21, 1]), a group of order $21 = 3 \cdot 7$ containing a cyclic normal subgroup $C_7$ of index $p = 3$, and that realises a largest TPP triple of type $(3, 3, 3)$ and size $3^3 = 27$, and satisfies the bound $\frac{27}{21} < \frac{3^2}{2\cdot3 - 1} = \frac{9}{5} = 1.8$ \cite[Table 1]{HM}. The same data also shows that there are actually no groups of order $\leq 32$ containing cyclic normal subgroups of index $3$ and realising $\rho > \frac{9}{5}$. This seems to be true even for larger groups of order up to $100$, as indicated in the results of Xiang et. al. in 2018, who use a computational approach based on evolutionary search algorithms to look at TPP capacity $\beta$ (and other parameters related to group-theoretic matrix multiplication) in a selection of groups of order up to $100$ \citep[Tables 1, 3-4]{XZC}: some simple GAP computations show that of all groups listed in those tables that realise $\rho > \frac{9}{5}$ none contain cyclic normal subgroups of index $3$. Noting that subgroups of smallest prime index are always normal, this suggests the following conjecture (which is a generalisation of \citep[Conjecture 7.6]{HM}).

\begin{conjecture}
If $G$ is a group with a cyclic normal subgroup of prime index $p$, and $(S, T, U)$ is any TPP triple of $G$, then $|S||T||U| \leq \frac{p^2}{2p-1}|G|$, that is, $\rho(G) \leq \frac{p^2}{2p-1}$. For this bound to be as tight as possible $p$ can be taken to be the smallest such prime, in which case the cyclic subgroup of index $p$ will necessarily be normal.
\end{conjecture}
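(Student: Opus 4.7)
The plan is to parallel the argument of Theorem \ref{MABQuoPrime} for the subgroup case, adapting the coset-decomposition and disjointness machinery to arbitrary TPP triples. By the translation-invariance part of Observation \ref{Invariance} I would reduce to a basic triple $(S,T,U)$, so $1 \in S \cap T \cap U$. Fix a generator $gH$ of $G/H \cong C_p$ and, for each $i \in \{0,\ldots,p-1\}$, set $S_i := S \cap g^i H$, $T_i := T \cap g^i H$, $U_i := U \cap g^i H$, with cardinalities $n_i, m_i, k_i$. The triple $(S_0,T_0,U_0)$ is a TPP triple of the abelian subgroup $H$ by Observation \ref{SetClosureToSubgroups}, so by Proposition \ref{PairSizes}(2) the map $(s,t,u)\mapsto s^{-1}tu$ is injective on $S_0\times T_0 \times U_0$, and the image $S_0^{-1}T_0U_0 \subseteq H$ has exactly $n_0 m_0 k_0$ elements.

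The heart of the plan is to generalize Lemma \ref{DisjTPPSets} to arbitrary TPP triples. One would like to show: for each $i \neq 0$ with $n_i, m_i > 0$, the set $S_i^{-1}T_iU_0 \subseteq H$ has cardinality $n_i m_i k_0$ and is disjoint from $S_0^{-1}T_0U_0$; for distinct non-zero $i, j$ the sets $S_i^{-1}T_iU_0$ and $S_j^{-1}T_jU_0$ are disjoint; and the cross-products $S_i^{-1}T_iU_0$ and $S_j^{-1}U_jT_0$ are disjoint when $i \neq j$. Combined with permutation invariance, this gives
\[
n_0 m_0 k_0 + \sum_{i \neq 0} n_i m_i k_0 + \sum_{j \neq 0} n_j k_j m_0 \leq |H|,
\]
together with the symmetric variants obtained by permuting $S,T,U$. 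The final step is to maximize $|S||T||U| = \bigl(\sum_i n_i\bigr)\bigl(\sum_i m_i\bigr)\bigl(\sum_i k_i\bigr)$ subject to these constraints, by AM--GM or Lagrange multipliers, with a short case analysis on which of $\overline{S}, \overline{T}, \overline{U}$ have full support. In the extremal configuration the $n_i, m_i, k_i$ should balance as in the subgroup case of Theorem \ref{MABQuoPrime}, yielding $|S||T||U| \leq \frac{p^3}{2p-1}|H| = \frac{p^2}{2p-1}|G|$.

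The hard part will be the disjointness step. The subgroup proof in Lemma \ref{DisjTPPSets}(2) crucially invoked the normality of $U_0$ in $G$, derived from $U_0 \trianglelefteq U$, $U \not\leq H$ and the maximality of $H$. For an arbitrary TPP triple, $U_0 = U \cap H$ need not be a subgroup and the normality argument is unavailable. My plan is to exploit the cyclicity of $H$: conjugation by any coset representative acts on $H$ by a single power map $h \mapsto h^a$ with $a^p \equiv 1 \pmod{|H|}$, so the troublesome conjugate $t'(u u'^{-1}) t'^{-1} = (u u'^{-1})^{a^i}$ is a controlled power of an element of $Q(U_0)$. Combining this with the quotient-set characterization $Q(S) \cap Q(T)Q(U) = \{1\}$ from Theorem \ref{HM}, one should be able to rule out the collision equations without invoking subgroup normality. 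Failing that, a fallback is a case split by the pattern of vanishing cardinalities $n_i, m_i, k_i$, in the spirit of the closing stages of the proof of Theorem \ref{MABQuoPrime}.
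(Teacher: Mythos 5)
The statement you are addressing is the paper's closing \emph{conjecture}; the paper offers no proof of it, only the subgroup version (Theorem \ref{MABQuoPrime}) together with computational evidence, so your proposal has to be judged on whether it would actually close the gap between the two. It does not, and the place it fails is precisely the step that separates the conjecture from the theorem. In the subgroup proof, the cardinality and pairwise-disjointness of the translated products $S_x^{-1}T_xU_0$ rest on two structural facts: $S_x$ is a coset $s_xS_0$ of the subgroup $S_0$ (so the product is a single translate of the subgroup $S_0T_0U_0$), and $U_0$ (resp.\ $T_0$) is normal in $G$, so that the conjugate $t_x(\widetilde u_0 u_0^{-1})t_x^{-1}$ lands back in $U_0 \subseteq Q(U)$ and the TPP can be invoked. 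For arbitrary subsets both facts evaporate. Already your cardinality claim $|S_i^{-1}T_iU_0| = n_i m_i k_0$ for $i \neq 0$ requires ruling out $s'^{-1}t'u' = s^{-1}tu$, i.e.\ $ss'^{-1} = t'(u'u^{-1})t^{-1} = (t't^{-1})\cdot t(u'u^{-1})t^{-1}$, and the second factor is a conjugate of an element of $Q(U_0)$ that need not lie in $Q(U)$, so Theorem \ref{HM} does not apply; the same obstruction hits every disjointness claim. Your proposed fix --- that conjugation by $g^i$ acts on the cyclic group $H$ as a power map $h \mapsto h^{a^i}$ --- does not resolve this, because $Q(U_0)$ for an arbitrary subset $U_0$ of a cyclic group is not closed under $h \mapsto h^{a^i}$: you would need $(u'u^{-1})^{a^i} \in Q(U)$, and nothing forces that.

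There is also direct evidence that your generalized disjointness lemma cannot be salvaged by abelianness alone: the paper's final remarks exhibit groups of order $32$ (GAP IDs $[32,11]$ and $[32,27]$) with abelian normal subgroups of index $2$ realising $\rho = \tfrac{3}{2} > \tfrac{4}{3}$ via triples of type $(6,4,2)$. For those triples the inequality your lemma would produce is arithmetically incompatible with $\rho = \tfrac32$, so the product sets must genuinely overlap; any correct proof therefore has to extract something quantitative from cyclicity beyond ``conjugation is a power map,'' and your plan does not identify what that is. The concluding optimization (maximizing $(\sum_i n_i)(\sum_i m_i)(\sum_i k_i)$ subject to the weighted constraints and their permuted variants) is likewise only sketched and is not a routine AM--GM computation, but that is secondary: as written, your proposal reproduces the known subgroup argument and defers the genuinely open step to a heuristic that, on inspection, fails.
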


The conjecture is false for the more general case of abelian normal subgroups of prime index: for $p = 2$, where the upper bound to be respected is $\rho \leq \frac{4}{3}$, the smallest known counterexamples are two $2$-groups of order $32$, one of type $(C_4 \times C_4) \rtimes C_2$ (GAP ID [32, 11]) and another of type $(C_2 \times C_2 \times C_2 \times C_2) \rtimes C_2$ (GAP ID [32, 27]), both containing a non-cyclic abelian subgroup of index $2$, but no cyclic subgroups of index $2$, and realising $\rho = 1.5 > \frac{4}{3}$ (via TPP triples of type $(6, 4, 2)$).

\end{document}